\def\@seccntDot{.}
\def\@seccntformat#1{\csname the#1\endcsname\@seccntDot\hskip 0.5em}
\renewcommand\section{\@startsection{section}{1}{\z@}%
{18\p@ \@plus 6\p@ \@minus 3\p@}%
{9\p@ \@plus 6\p@ \@minus 3\p@}%
{\large\bfseries\boldmath}}
\renewcommand\subsection{\@startsection{subsection}{2}{\z@}%
{12\p@ \@plus 6\p@ \@minus 3\p@}%
{3\p@ \@plus 6\p@ \@minus 3\p@}%
{\bfseries\boldmath}}
\renewcommand\subsubsection{\@startsection{subsubsection}{3}{\z@}%
{12\p@ \@plus 6\p@ \@minus 3\p@}%
{\p@}%
{\bfseries\boldmath}}
\theoremstyle{plain}
\newtheorem{theorem}{Theorem}
\newtheorem{corollary}[theorem]{Corollary}
\newtheorem{problem}{Problem}
\theoremstyle{definition}
\newtheorem{definition}{Definition}
\newtheorem{remark}{Remark}
\numberwithin{equation}{section}
\title{On walk domination: Between different types of walks and $m_3$-paths\thanks{This work was supported by the National Natural Science Foundation of China (No. 12331014), Science and Technology Commission of Shanghai Municipality (No. 22DZ2229014), Natural Science Foundation of Fujian Province, China (No. 2024J01875), Science-Technology Foundation for Middle-aged and Young Scientist of Fujian Province (No. JAT220306), and Science-Technology Foundation of Putian University (No. 2023059).} }
\author[a]{Hangdi Chen\thanks{Email: \texttt{chenhangdi188@126.com}}}
\author[b]{Yuhan Ma\thanks{Corresponding author. Email: \texttt{kid4068@163.com}}}
\author[b]{Qingjie Ye\thanks{Email: \texttt{qjye@math.ecnu.edu.cn}}}
\affil[a]{Fujian Key Laboratory of Financial Information Processing, Key Laboratory of Applied Mathematics of Fujian Province University, Putian University, Fujian Putian 351100, China}
\affil[b]{School of Mathematical Sciences, Key Laboratory of MEA (Ministry of Education), Shanghai Key Laboratory of PMMP, East China Normal University, Shanghai 200241, China}
\date{}
\begin{document}

\maketitle

\begin{abstract}
Given two non-adjacent vertices \( u \) and \( v \), we say a $uv$-walk \( W \) dominates a $uv$-walk \( W' \) if every internal vertex of \( W' \) is adjacent to some internal vertex of \( W \) or belongs to \( W \). A class of walks \(\mathbf{A}\) dominates a class of walks \(\mathbf{B}\) if for every pair of non-adjacent vertices $u,v$ in the graph, every $uv$-walk in \(\mathbf{A}\) dominates every $uv$-walk in \(\mathbf{B}\). 
This paper investigates the domination relationships among various types of walks connecting two non-adjacent vertices in a graph. 
In particular, we focus on a problem proposed by Tondato (2024).
We study the domination between different walk types (shortest paths, toll walks, weakly toll walks, $l_k$-paths for $k\in \left\{2,3\right\}$) and $m_3$-paths. Furthermore, we show how these relationships give rise to characterizations of graph classes. 

\bigskip
\noindent\textbf{Keywords.} Walk domination, $m_3$-path, HHD-free.

\noindent\textbf{Mathematics Subject Classification.} 05C38, 05C69
\end{abstract}

\section{Introduction}


Let $G$ be a finite, undirected, simple, and connected graph with vertex set $V(G)$ and edge set $E(G)$. For any pair of vertices \( u, v \in V(G) \), a \textit{\( uv \)-walk} is a sequence of vertices $W: u = v_0, v_1, \ldots, v_n = v$ such that $v_{i-1}$ is adjacent to $v_i$ for all $i \in \{1, \ldots, n\}$. The vertices in the sequence need not be distinct. The vertices \( u \) and \( v \) are the \textit{ends} of the walk, while \( v_1, \ldots, v_{n-1} \) are its \textit{internal vertices}.
The integer \( n \) is the \textit{length of the walk}. We use $W[v_i,v_j]$ ($i\leq j$) to denote the vertices of the subwalk in the walk $W$ between $v_i$ and $v_j$. 

Next, we define several special types of walks discussed in this paper. A \textit{\( uv \)-path} is a \( uv \)-walk with distinct vertices. 
A $uv$-\textit{induced path} (or \textit{monophonic path} \cite{Farber1986}) is a 
path from $u$ to $v$ where two vertices are adjacent if and only if they are consecutive. 
%
%
A \textit{$uv$-shortest path} (or \textit{geodesic} \cite{Farber1986}) is a path between two vertices $u,v$ in a graph $G$ with a length equal to their \textit{distance} $d(u, v)$, which is the minimum number of edges on a connecting path.
A \textit{\( uv \)-\( m_3 \) path} \cite{Dragan1999} is a \( uv \)-induced path of length at least three.
A $uv$-\textit{weakly toll walk} \cite{Dourado2024} is a $uv$-walk such that $u$ is adjacent only to the vertex $v_{1}$, with possibly $\left\{v_{1}\right\} \cap\left\{v_{2}, \ldots, v_{k-1}\right\} \neq \emptyset$, and $v$ is adjacent only to the vertex $v_{k-1}$, with possibly $\left\{v_{k-1}\right\} \cap\left\{v_{1}, \ldots, v_{k-2}\right\} \neq \emptyset$. 
A $uv$-\textit{toll walk} \cite{Alcon2015} is a $uv$-walk satisfying that $u$ is adjacent only to the vertex $v_{1}$, $v$ is adjacent only to the vertex $v_{k-1}$, $\left\{v_{1}\right\} \cap\left\{v_{2}, \ldots, v_{k-1}\right\}=\emptyset$ and $\left\{v_{k-1}\right\} \cap \left\{v_{1}, \ldots, v_{k-2}\right\}=\emptyset$.
A $uv$-$l_k$-\textit{path} is a $uv$-induced path with length at most $k$. 
%
%
We use the following notation for the sets of different walk types between two non-adjacent vertices \( u \) and \( v \):
\begin{align*}
\mathbf{SP}(u, v) &= \{ W : W \text{ is a } uv\text{-shortest path} \}, \\
\mathbf{IP}(u, v) &= \{ W : W \text{ is a } uv\text{-induced path} \}, \\
\mathbf{P}(u, v) &= \{ W : W \text{ is a } uv\text{-path} \}, \\
\mathbf{m_3}(u, v) &=\{W: W \text{ is a }uv\text{-}m_3 \text{ path} \}, \\
\mathbf{TW}(u, v) &= \{ W : W \text{ is a } uv\text{-toll walk} \}, \\
\mathbf{WTW}(u, v) &= \{ W : W \text{ is a } uv\text{-weakly toll walk} \}, \\
\mathbf{W}(u,v) &=\{W: W \text{ is a } uv\text{-walk} \}\\
\mathbf{l_k}(u, v) &= \{ W : W \text{ is a } uv\text{-}l_k\text{-path} \}.
\end{align*}
These sets of walks exhibit several inclusion relationships. For instance, a key property of shortest paths is that they are always induced paths, which means $\mathbf{SP}(u, v) \subseteq \mathbf{IP}(u, v)$.
The following remark summarizes the relationships between the different types of walks we have considered, which follow directly from the definitions.
\begin{remark}\label{rem1}
For any non-adjacent vertices $u,v$,
\begin{equation*}
\left.
\begin{array}{r}
    \mathbf{SP}(u, v) \\
    \mathbf{m_3}(u, v) \\
    \mathbf{l_2}(u, v) \subseteq \mathbf{l_3}(u, v)
\end{array}
\right\}
\subseteq \mathbf{IP}(u, v) \subseteq
\left\{
\begin{array}{l}
    \mathbf{P}(u, v) \\
    \mathbf{TW}(u, v) \subseteq \mathbf{WTW}(u, v)
\end{array}
\right\}
\subseteq \mathbf{W}(u, v).
\end{equation*}
\end{remark}





A powerful framework for analyzing the interaction between different types of walks is that of \textit{walk domination}.
\begin{definition}\label{def1}
    Let \( u \) and \( v \) be two non-adjacent vertices in $G$. The \( uv \)-walk \( W : u, v_1, \ldots, v_{m-1}, v \) \emph{dominates} the \( uv \)-walk \( W' : u, v'_1, \ldots, v'_{n-1}, v \) if every internal vertex of \( W' \) is adjacent to some internal vertex of \( W \) or belongs to \( W \).
\end{definition}

\begin{definition}\label{def2}
Let $\mathbf{A/B}$ be the class of graphs $G$ such that for every pair of non-adjacent vertices $u$ and $v$ of $G$, every walk $W \in \mathbf{A}(u, v)$ dominates every walk $W' \in \mathbf{B}(u, v)$ (which is vacuously true if $\mathbf{A}(u, v) = \emptyset$).
\end{definition}



For any given class of graphs, a natural question is whether certain types of walks consistently dominate others. The study of walk domination not only explores this question but also seeks to characterize classes of graphs based on this very property.
The following definitions for specific graph classes are fundamental to our results.

A \textit{cycle} $C$ of length $n$ in a graph $G$ is a path $P: v_1,\ldots,v_n$ plus an edge between $v_1$ and $v_n$. Each edge of $G$ between two non-consecutive vertices of $C$ is called a \textit{chord}. The cycle of length $n$ without chords is denoted by $C_n$.
A \textit{hole} is a chordless cycle with at least five vertices.
A \textit{house} is the complement of an induced path with five vertices.
A \textit{domino} (or $D$) is the graph obtained from the chordless cycle $x_0,x_1\ldots,x_5$ by adding the chord $x_1x_4$. Some other graphs used in this paper are shown in Figure~\ref{fig:graph}.
%

Over the years, extensive work has been done concerning the domination number and
its variations. Walks in graphs are subgraphs that inform us about the topological structure of graphs. Hence, as a variation of the domination number, walk domination has been widely discussed. In \cite{Alcon2016,Farber1987,GM,TSB} it was proved that the notion of domination between different types of walks plays a central role in characterizations of graph classes. Moreover, the walks studied in \cite{Alcon2016,Alcon2015,Changat1999,Changat2001,Changat2010,Dirac1961,Dragan1999,Dourado2024,Farber1986,Farber1987,GM,TSB,counter} are related to convexities defined over a walk system. Standard graph classes like interval and superfragile \cite{GM} have been characterized. It should be noted that some graph classes characterized by walk domination are not hereditary \cite{Alcon2016,GM}, i.e. they can not be characterized as \(\mathcal{F}\)-free. Note that we say that a graph $G$ is $\mathcal{F}$-free if $G$ does not contain any induced subgraph that belongs to $\mathcal{F}$.


Some important classes of graphs have been characterized by domination between different types of walks. In \cite{Alcon2016,TSB}, Alc$\acute{\text{o}}$n and Tondato considered walks, tolled-walks, paths, induced paths, or shortest paths. Table \ref{table1}, with walk types $\mathbf{A}$ in the first column and $\mathbf{B}$ in the first row, describes the graph classes $\mathbf{A/B}$ that result from these domination relationships. The following theorem summarizes some of the key results from the table:

\begin{theorem}[\citet{TSB}]\label{thm:hhd}
      $\mathbf{IP}/\mathbf{m_3}=\mathbf{W}/\mathbf{m_3}=\text{HHD-free}$.
\end{theorem}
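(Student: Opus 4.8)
The plan is to sandwich the two domination classes between HHD-free and each other. From Remark~\ref{rem1} we have $\mathbf{IP}(u,v)\subseteq\mathbf{W}(u,v)$, so requiring \emph{all} walks to dominate is stronger than requiring all induced paths to, giving $\mathbf{W}/\mathbf{m_3}\subseteq\mathbf{IP}/\mathbf{m_3}$. It therefore suffices to prove $\mathbf{IP}/\mathbf{m_3}\subseteq\text{HHD-free}$ and $\text{HHD-free}\subseteq\mathbf{W}/\mathbf{m_3}$. For the second inclusion I first reduce walks to induced paths: any $uv$-walk $W$ contains an induced $uv$-path $\widetilde W$ with $V(\widetilde W)\subseteq V(W)$ and with every internal vertex of $\widetilde W$ an internal vertex of $W$ (delete repeated occurrences to obtain a $uv$-path, then iteratively replace a chorded subpath by its chord). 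Since a vertex adjacent to an internal vertex of $\widetilde W$ is adjacent to an internal vertex of $W$, and $V(\widetilde W)\subseteq V(W)$, if $\widetilde W$ dominates an $m_3$-path then so does $W$. Hence it is enough to show that in an HHD-free graph every induced $uv$-path dominates every $uv$-$m_3$-path.

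For $\mathbf{IP}/\mathbf{m_3}\subseteq\text{HHD-free}$ I argue contrapositively: if $G$ contains an induced hole, house, or domino, I exhibit non-adjacent $u,v$, an induced path $P$ and an $m_3$-path $Q$ such that some internal vertex of $Q$ is dominated by neither the interior of $P$ nor $V(P)$; since the forbidden subgraph is induced, the non-adjacencies are preserved and this failure persists in $G$. For a hole $v_1\cdots v_n$ ($n\ge5$) take $u=v_1$, $v=v_3$, $P=v_1v_2v_3$ and $Q=v_1v_nv_{n-1}\cdots v_3$; the only internal vertex $v_2$ of $P$ has no neighbor among the internal vertices of $Q$. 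For the house, realized as $C_5=y_1\cdots y_5$ plus the chord $y_1y_3$, take $u=y_2$, $v=y_5$, $P=y_2y_1y_5$, $Q=y_2y_3y_4y_5$, for which $y_4$ is undominated. For the domino (the cycle $x_0\cdots x_5$ plus the chord $x_1x_4$) take $u=x_0$, $v=x_3$, $P=x_0x_1x_2x_3$, $Q=x_0x_5x_4x_3$, for which $x_5$ is undominated.

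For $\text{HHD-free}\subseteq\mathbf{W}/\mathbf{m_3}$ (equivalently the induced-path statement above), suppose for contradiction there is a counterexample, and choose one with $|V(P)|+|V(Q)|$ minimum: non-adjacent $u,v$, an induced path $P$, an $m_3$-path $Q$, and an internal vertex $q$ of $Q$ not dominated by $P$. I first show $V(P)\cap V(Q)=\{u,v\}$. If $x$ is a shared internal vertex, then $q\ne x$ (as $q\notin V(P)$) and, after possibly swapping the roles of $u$ and $v$, $q$ lies strictly between $u$ and $x$ on $Q$. Then $u$ and $x$ are non-adjacent (they lie at distance at least two on the induced path $Q$), and $Q[u,x]$ has length at least $3$: a length-$2$ segment would make $q$ the neighbor of $u$ on $Q$ and adjacent to $x\in\mathrm{int}(P)$, contradicting that $q$ is undominated. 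Thus $(u,x,P[u,x],Q[u,x],q)$ is a strictly smaller counterexample, a contradiction. With the paths internally disjoint, $C:=P\cup Q$ is a cycle, and since $P$ and $Q$ are induced while $u,v$ carry no chords, every chord of $C$ joins $\mathrm{int}(P)$ to $\mathrm{int}(Q)$.

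It remains to show that $G[V(C)]$ is then a hole, house, or domino, contradicting HHD-freeness. Write $Q:u=q_0,q_1,\dots,q_t=v$. If $C$ has no chord it is a chordless cycle of length $|V(P)|+|V(Q)|\ge5$, i.e.\ a hole. Otherwise I localize around $q$: let $q_c,\dots,q_d$ be the maximal run of consecutive internal vertices of $Q$ containing $q$ and having no neighbor in $\mathrm{int}(P)$, so each flank $q_{c-1},q_{d+1}$ is either an end of $Q$ or has a neighbor in $\mathrm{int}(P)$; choosing such neighbors so that the connecting subpath of $P$ is shortest produces a small induced subgraph whose shape is pinned down by minimality. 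The worked examples indicate the two remaining outcomes precisely: a run of length one flanked by an endpoint and a single anchor in $\mathrm{int}(P)$ forces $G[V(C)]$ to be $C_5$ plus one chord, a house, while the analogous configuration closing a six-cycle with a single chord forces a domino. The main obstacle is exactly this chord analysis: leveraging the minimality of $|V(P)|+|V(Q)|$ to bound the lengths of the two paths and to control the number and placement of the chords of $C$, thereby forcing $G[V(C)]$ to be one of the three forbidden graphs. The hypothesis that $Q$ is an $m_3$-path (length at least $3$) is indispensable throughout: it removes the length-$2$ degeneracy in the disjointness step and supplies the length-$3$ path that exposes an undominated vertex, so that the offending configuration is a genuine house or domino rather than a harmless short cycle.
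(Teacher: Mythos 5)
First, note that the paper does not prove this statement at all: Theorem~\ref{thm:hhd} is quoted from \cite{TSB}, so there is no in-paper proof to compare yours against. Judged on its own, your proposal has the right architecture and several fully correct pieces: the reduction of $\mathbf{W}/\mathbf{m_3}$ to $\mathbf{IP}/\mathbf{m_3}$ by extracting an induced $uv$-path from a walk is sound (and explains why the two classes coincide); the three explicit non-domination witnesses in the hole, house and domino are correct, so $\mathbf{IP}/\mathbf{m_3}\subseteq\text{HHD-free}$ is done; and the minimal-counterexample step forcing $P$ and $Q$ to be internally disjoint, so that $C=P\cup Q$ is a cycle all of whose chords run between $\mathrm{int}(P)$ and $\mathrm{int}(Q)$, is correct, including the use of the $m_3$ hypothesis to exclude the length-two degeneracy.

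The genuine gap is the final step, and you name it yourself: ``the main obstacle is exactly this chord analysis.'' Everything after ``Otherwise I localize around $q$'' describes what a proof would have to do rather than doing it. Two concrete problems. (i) The target ``$G[V(C)]$ is then a hole, house, or domino'' is not the right statement: $C$ may be long and carry many chords, and what must be shown is that $G[V(C)]$ \emph{contains} one of the three graphs as an induced subgraph; which vertices may be discarded is exactly where the work lies. (ii) The localization around the maximal run $q_c,\dots,q_d$ does not by itself yield a forbidden configuration: if the run is the single vertex $q$ and both flanks $q_{c-1},q_{d+1}$ attach to the same vertex $p_\alpha\in\mathrm{int}(P)$, the cycle $q_{c-1},q,q_{d+1},p_\alpha$ is a harmless $C_4$, and one must argue further using $u$, $v$, or additional anchors to manufacture a house or domino (for instance with $P=u,p_1,v$ and $Q=u,q_1,q_2,q_3,v$ and chords $p_1q_1,p_1q_3$, the house sits on $\{u,q_1,q_2,q_3,p_1\}$, not on the $C_4$). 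The required case analysis --- whether a flank is an end of $Q$, whether the two anchors coincide, are adjacent, or are separated along $P$, and whether the run has length one or more --- is precisely the substance of Tondato's argument and is absent here. Until it is carried out, the inclusion $\text{HHD-free}\subseteq\mathbf{IP}/\mathbf{m_3}$ is not established.
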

\begin{theorem}[\citet{TSB}]\label{thm:m3w}
    $\mathbf{m_3}/\mathbf{W}=\left\{P_4,A, \overline{\text{gem}\cup K_2}, C_5, \overline{X_{58}},X_{96}, F_3 \right\}$-$\text{free}$.
\end{theorem}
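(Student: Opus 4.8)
The plan is to convert the universal statement over all walks into a finite, local covering condition, and then run the standard two-directional forbidden-subgraph argument on top of it.

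\emph{Step 1 (reduction to a covering condition).} First I would record the following reformulation: for non-adjacent $u,v$ and a fixed $uv$-$m_3$ path $W$, the path $W$ dominates every walk in $\mathbf{W}(u,v)$ if and only if every vertex of $V(G)\setminus\{u,v\}$ lies on $W$ or is adjacent to an internal vertex of $W$. Indeed, if $W$ dominates all $uv$-walks then, since $G$ is connected, any $x\in V(G)\setminus\{u,v\}$ is an internal vertex of some $uv$-walk (concatenate a $u$-$x$ path with an $x$-$v$ path), so domination forces the covering condition at $x$; conversely, if the condition holds, every internal vertex of every $uv$-walk is either one of $u,v$ (which lie on $W$) or is covered, so $W$ dominates the walk. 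By Definition~\ref{def2} this reduces membership in $\mathbf{m_3}/\mathbf{W}$ to the assertion that, for every non-adjacent pair $u,v$, the interior of every $uv$-$m_3$ path dominates $V(G)\setminus\{u,v\}$. I would also note that this covering condition is preserved under taking induced subgraphs, so that $\mathbf{m_3}/\mathbf{W}$ is hereditary and a forbidden-subgraph characterization is the right target.

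\emph{Step 2 (necessity).} For each $F$ in the family $\{P_4,A,\overline{\mathrm{gem}\cup K_2},C_5,\overline{X_{58}},X_{96},F_3\}$ I would exhibit, inside $F$, a non-adjacent pair $u,v$, a $uv$-$m_3$ path $W$, and a vertex $x\in V(F)\setminus\{u,v\}$ that is neither on $W$ nor adjacent to any internal vertex of $W$; by Step 1 this certifies $F\notin\mathbf{m_3}/\mathbf{W}$. For $C_5=v_1v_2v_3v_4v_5$, for instance, the path $W=v_1v_5v_4v_3$ (with ends $v_1,v_3$) leaves $v_2$ uncovered. Since such a witness uses only vertices and edges of the induced copy, all its adjacencies are unchanged in any graph $G$ containing $F$ as an induced subgraph, whence $G\notin\mathbf{m_3}/\mathbf{W}$. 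This yields $\mathbf{m_3}/\mathbf{W}\subseteq\{P_4,\ldots,F_3\}$-free and is the routine, graph-by-graph part of the argument.

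\emph{Step 3 (sufficiency and the main obstacle).} For the reverse inclusion I would assume $G$ is $\{P_4,\ldots,F_3\}$-free, fix a $uv$-$m_3$ path $W:v_0v_1\cdots v_m$, and suppose some $x$ violates the covering condition of Step 1; the goal is to produce one of the forbidden graphs. The analysis branches on how $x$ attaches to $W$ — on whether $x$ is adjacent to $u$, to $v$, to both, or to neither — and on the length $m$. The short cases are transparent: if $m=3$ and $x$ is adjacent to both $u$ and $v$, then $u,v_1,v_2,v,x$ induce a $C_5$, a contradiction. The genuine difficulty is the general case, where an arbitrarily long $m_3$ path together with a shortest $x$-$W$ connection a priori produces large induced configurations, and the crux is to show that $\{P_4,\ldots,F_3\}$-freeness always collapses these onto one of the finitely many listed obstructions, while simultaneously verifying that no obstruction has been omitted. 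I expect this collapse — a shortest-counterexample analysis on $m$ and on the distance $d(x,W)$, with the local adjacency pattern of $x$ to $u,v$ and to the first internal vertices of $W$ driving the case distinctions — to be the principal obstacle of the proof.
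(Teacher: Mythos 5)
First, a point of reference: Theorem~\ref{thm:m3w} is not proved in this paper at all --- it is quoted from \cite{TSB} as background, so there is no in-paper proof to compare your attempt against; the actual argument lives in Tondato's paper. Judged on its own terms, your Step~1 is correct and is indeed the standard opening move for any class of the form $\mathbf{A}/\mathbf{W}$: since $G$ is connected, every vertex of $V(G)\setminus\{u,v\}$ occurs as an internal vertex of some $uv$-walk, so ``$W$ dominates every $uv$-walk'' collapses to the local covering condition that every such vertex lies on $W$ or is adjacent to an internal vertex of $W$. Your Step~2 is also sound in outline, and the one instance you verify ($C_5$ with $W=v_1v_5v_4v_3$ leaving $v_2$ uncovered) is correct; the remaining six graphs would need the same explicit witnesses, but that is genuinely routine.

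The genuine gap is Step~3, which is the entire content of the theorem and which you do not carry out. You state the intended strategy (take a vertex $x$ violating the covering condition for some $uv$-$m_3$ path $W$, branch on the adjacencies of $x$ to $u$, $v$, and the first internal vertices of $W$, and extract a forbidden subgraph), verify one trivial subcase, and then explicitly defer the general case as ``the principal obstacle.'' As written, the proposal therefore establishes only the inclusion $\mathbf{m_3}/\mathbf{W}\subseteq\{P_4,A,\overline{\text{gem}\cup K_2},C_5,\overline{X_{58}},X_{96},F_3\}$-free and gives no argument for the converse --- in particular, no reason why the list closes up at exactly these seven graphs rather than requiring further obstructions (e.g.\ why long holes or other attachments of $x$ never produce a new minimal configuration). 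One remark that would make the deferred step far more tractable, and which your sketch does not exploit: the presence of $P_4$ (the five-vertex path) in the forbidden family means every graph in the class is $P_5$-free in the vertex-count convention, which severely limits the length of any induced $uv$-$m_3$ path and of any induced connection from $x$ to $W$, so the ``arbitrarily long'' configurations you worry about cannot actually arise. Without that (or some substitute) being worked out, the proposal is a plan for a proof of the hard direction, not a proof.
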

\begin{theorem}[\citet{TSB}]\label{thm:m3ip}
   $\mathbf{m_3}/\mathbf{IP}=\left\{\text{hole}, D, \text{Antenna}, X_5 \right\}$-$\text{free}$.
\end{theorem}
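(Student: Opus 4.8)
The plan is to prove the two inclusions $\mathbf{m_3}/\mathbf{IP} \subseteq \{\text{hole},D,\text{Antenna},X_5\}\text{-free}$ and $\{\text{hole},D,\text{Antenna},X_5\}\text{-free} \subseteq \mathbf{m_3}/\mathbf{IP}$ separately. For the first (necessity of forbidding the four graphs) I argue by contraposition: if $G$ contains one of hole, $D$, Antenna, or $X_5$ as an induced subgraph $H$, I exhibit inside $H$ a pair of non-adjacent vertices $u,v$, a $uv$-$m_3$-path $W$, and a $uv$-induced path $W'$ such that $W$ fails to dominate $W'$. Because $H$ is induced, the adjacencies among $V(H)$ in $G$ coincide with those in $H$; hence a vertex of $W'$ that inside $H$ neither lies on $W$ nor is adjacent to an internal vertex of $W$ retains this property in $G$, so the failure of domination witnessed in $H$ is a genuine failure in $G$ and $G \notin \mathbf{m_3}/\mathbf{IP}$.

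For the explicit witnesses: in a hole $v_1 v_2 \cdots v_n$ ($n \ge 5$) take $u = v_1$, $v = v_3$; then $W' = v_1 v_2 v_3$ is an induced path of length two and $W = v_1 v_n v_{n-1}\cdots v_3$ is a $uv$-$m_3$-path, yet the internal vertex $v_2$ of $W'$ is adjacent only to $v_1,v_3$, hence neither lies on $W$ nor is adjacent to any internal vertex of $W$. In the domino with rim $x_0 x_1 \cdots x_5$ and chord $x_1 x_4$, take $u = x_0$, $v = x_3$, $W = x_0 x_5 x_4 x_3$ and $W' = x_0 x_1 x_2 x_3$; here $x_2$ is adjacent to neither of the internal vertices $x_4,x_5$ of $W$ and does not lie on $W$. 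Analogous explicit choices handle the Antenna and $X_5$ (the precise labelling from Figure~\ref{fig:graph} is used). This settles $\subseteq$.

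For the converse (sufficiency) assume $G$ is $\{\text{hole},D,\text{Antenna},X_5\}$-free and suppose, for contradiction, that some $uv$-$m_3$-path $W: u = v_0, v_1,\ldots,v_m = v$ ($m \ge 3$) fails to dominate some $uv$-induced path $W': u = w_0, w_1,\ldots,w_n = v$; choose such a configuration minimizing $m + n$. Since $w_0, w_n \in V(W)$ are dominated while some internal $w_i$ is not, pick a maximal \emph{undominated segment} $w_{a+1},\ldots,w_{b-1}$ (none on $W$, none adjacent to an internal vertex of $W$) flanked by dominated $w_a, w_b$. Each of $w_a, w_b$ either lies on $W$ or is adjacent to an internal vertex of $W$; in all cases one attaches the subpath $w_a,\ldots,w_b$ of $W'$ to a subpath of $W$ running between the corresponding vertices $v_p, v_q$, obtaining a cycle $C$. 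The plan is then to exploit that $W$ and $W'$ are induced (so chords of $C$ can only run between the two arcs) together with the defining property of the undominated segment (its vertices have no neighbour among $v_1,\ldots,v_{m-1}$) to pin down all chords of $C$.

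The main obstacle is the resulting chord analysis. Hole-freeness forces $C$, once its length is at least five, to carry chords, but the undominated vertices are barred from sending chords to the interior of the $W$-arc, so the admissible chords are confined to a narrow pattern, and minimality of $m+n$ further rules out long detours. The crux is to show that every configuration surviving these constraints contains one of $D$, the Antenna, or $X_5$ as an induced subgraph, contradicting the hypothesis — in particular that these three graphs, together with the holes, are exactly the minimal obstructions. Carrying this out requires organizing the argument by the length of $C$ and by whether each of $w_a,w_b$ lies on $W$ or is merely adjacent to it, and separately treating the boundary situations where $v_p$ or $v_q$ coincides with the end $u$ or $v$ (so that an undominated vertex is permitted to be adjacent to an endpoint of $W$). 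This endpoint interaction, and verifying that no obstruction outside the list $\{\text{hole},D,\text{Antenna},X_5\}$ can arise, is where the real work lies.
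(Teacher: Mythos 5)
This theorem is not proved in the paper at all --- it is quoted from \citet{TSB} as background --- so there is no in-paper argument to compare against; I can only assess your attempt on its own terms, noting that its intended structure mirrors the proofs the paper does give for $\mathbf{m_3}/\mathbf{SP}$ and $\mathbf{m_3}/\mathbf{l_3}$ (explicit witnesses for necessity, then a chord/hole case analysis for sufficiency).

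Your necessity direction is essentially complete: the hole and domino witnesses check out, the observation that an induced copy of $H$ transfers the domination failure to $G$ is correct (all internal vertices of $W$ lie in $V(H)$, so non-adjacency within $H$ is non-adjacency in $G$), and the remaining two graphs are finite checks, though you should actually write down the Antenna and $X_5$ witnesses rather than calling them ``analogous.'' The sufficiency direction, however, is a plan rather than a proof, and the gap is exactly where you say ``the real work lies.'' You set up the minimal counterexample, the undominated segment $w_{a+1},\dots,w_{b-1}$, and the cycle $C$, and you correctly identify the constraints on chords of $C$; but the entire case analysis --- showing that hole-freeness forces chords into $C$, that the admissible chords (including those from undominated vertices to the endpoints $u,v$ of $W$, which your setup permits) always produce an induced $D$, Antenna, or $X_5$, and that no fifth obstruction can arise --- is asserted, not carried out. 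This analysis is the substance of the theorem; compare the paper's proof of $\mathbf{m_3}/\mathbf{SP}$, where the analogous step occupies roughly ten nested cases. In addition, some of your setup needs care before the case analysis could even begin: the ``cycle'' $C$ need not be a cycle when the attachment vertices $v_p,v_q$ coincide or when $w_a$ or $w_b$ lies on $W$ itself, and it is not established that the minimality of $m+n$ actually eliminates the long-detour configurations you invoke it for. As it stands the argument does not prove the sufficiency inclusion.
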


\begin{figure}[p]
    \centering
     \begin{subfigure}{0.3\textwidth}
        \centering
   \begin{tikzpicture}
 \tikzstyle{every node}=[circle, fill=black, inner sep=2pt, draw=none];
 \node (x0) at (-2,0)  [label=left:$x_0$] {};
 \node (x1) at (-1,0)  [label=above:$x_1$] {};
 \node (x2) at (0,0)  [label=above:$x_2$] {};
 \node (x3) at (1,0)  [label=above:$x_3$] {};
 \node (x4) at (2,0)  [label=right:$x_4$] {};
 \draw [thick] (x0) -- (x1);
 \draw [thick] (x1) -- (x2);
 \draw [thick] (x2) -- (x3);
 \draw [thick] (x3) -- (x4);
\end{tikzpicture}
\caption{$P_4$}
\end{subfigure}
\hfill
 \begin{subfigure}{0.3\textwidth}
        \centering
   \begin{tikzpicture}
 \tikzstyle{every node}=[circle, fill=black, inner sep=2pt, draw=none];
 \node (x0) at (-1.5,0)  [label=left:$x_0$] {};
 \node (x1) at (-0.5,0)  [label=below:$x_1$] {};
 \node (x2) at (0.5,0)  [label=below:$x_2$] {};
 \node (x3) at (1.5,0)  [label=right:$x_3$] {};
 \node (x4) at (0,1)  [label=right:$x_4$] {};
 \node (x5) at (0,2)  [label=above:$x_5$] {};
 \draw [thick] (x0) -- (x1);
 \draw [thick] (x1) -- (x2);
 \draw [thick] (x2) -- (x3);
 \draw [thick] (x3) -- (x4);
 \draw [thick] (x0) -- (x4);
 \draw [thick] (x1) -- (x4);
 \draw [thick] (x2) -- (x4);
 \draw [thick] (x4) -- (x5);
\end{tikzpicture}
\caption{$\overline{gem\cup K_2}$}
\end{subfigure}
\hfill
 \begin{subfigure}{0.3\textwidth}
        \centering
   \begin{tikzpicture}
 \tikzstyle{every node}=[circle, fill=black, inner sep=2pt, draw=none];
 \node (x0) at (0.5,1.5)  [label=above:$x_0$] {};
 \node (x1) at (1,1)  [label=right:$x_1$] {};
 \node (x2) at (1,0)  [label=below:$x_2$] {};
 \node (x3) at (0,0)  [label=left:$x_3$] {};
 \node (x4) at (0,1)  [label=left:$x_4$] {};
 \draw [thick] (x0) -- (x1);
 \draw [thick] (x1) -- (x2);
 \draw [thick] (x2) -- (x3);
 \draw [thick] (x3) -- (x4);
 \draw[densely dashed] (x4) -- (x0);
\end{tikzpicture}
\caption{Hole}\label{fig:hole}
\end{subfigure}
\hfill
    \begin{subfigure}{0.3\textwidth}
        \centering
   \begin{tikzpicture}
 \tikzstyle{every node}=[circle, fill=black, inner sep=2pt, draw=none];
 \node (x0) at (0,2)  [label=left:$x_0$] {};
 \node (x1) at (0,1)  [label=left:$x_1$] {};
 \node (x2) at (0,0)  [label=left:$x_2$] {};
 \node (x3) at (1,0)  [label=right:$x_3$] {};
 \node (x4) at (1,1)  [label=right:$x_4$] {};
 \node (x5) at (1,2)  [label=right:$x_5$] {};
 
 \draw [thick] (x0) -- (x1);
 \draw [thick] (x0) -- (x5);
 \draw [thick] (x1) -- (x2);
 \draw [thick] (x1) -- (x4);
 \draw [thick] (x2) -- (x3);
 \draw [thick] (x3) -- (x4);
 \draw [thick] (x4) -- (x5);
\end{tikzpicture}
\caption{D (Domino)}\label{fig:domino}
\end{subfigure}
\hfill
   \begin{subfigure}{0.3\textwidth}
        \centering
   \begin{tikzpicture}
 \tikzstyle{every node}=[circle, fill=black, inner sep=2pt, draw=none];
 \node (x0) at (-1.5,0)  [label=left:$x_0$] {};
 \node (x1) at (-0.5,0)  [label=below:$x_1$] {};
 \node (x2) at (0.5,0)  [label=below:$x_2$] {};
 \node (x3) at (1.5,0)  [label=right:$x_3$] {};
 \node (x4) at (0,1)  [label=right:$x_4$] {};
 \node (x5) at (0,2)  [label=right:$x_5$] {};
 
 \draw [thick] (x0) -- (x1);
 \draw [thick] (x1) -- (x2);
 \draw [thick] (x1) -- (x4);
 \draw [thick] (x2) -- (x3);
 \draw [thick] (x2) -- (x4);
 \draw [thick] (x4) -- (x5);
\end{tikzpicture}
\caption{$F_3$}
\end{subfigure}
\hfill 
   \begin{subfigure}{0.3\textwidth}
        \centering
   \begin{tikzpicture}
 \tikzstyle{every node}=[circle, fill=black, inner sep=2pt, draw=none];
 \node (x0) at (1,0)  [label=right:$x_0$] {};
 \node (x1) at (1,1)  [label=right:$x_1$] {};
 \node (x2) at (1,2)  [label=right:$x_2$] {};
 \node (x3) at (0,2)  [label=left:$x_3$] {};
 \node (x4) at (0,1)  [label=left:$x_4$] {};
 \node (x5) at (0,0)  [label=left:$x_5$] {};
 
 \draw [thick] (x0) -- (x1);
 \draw [thick] (x1) -- (x2);
 \draw [thick] (x1) -- (x4);
 \draw [thick] (x2) -- (x3);
 \draw [thick] (x3) -- (x4);
 \draw [thick] (x4) -- (x5);
\end{tikzpicture}
\caption{A}
\end{subfigure}
\hfill
   \begin{subfigure}{0.3\textwidth}
        \centering
   \begin{tikzpicture}
 \tikzstyle{every node}=[circle, fill=black, inner sep=2pt, draw=none];
 \node (x0) at (0,0)  [label=left:$x_0$] {};
 \node (x1) at (0,1)  [label=left:$x_1$] {};
 \node (x2) at (0,2)  [label=left:$x_2$] {};
 \node (x3) at (1,2)  [label=right:$x_3$] {};
 \node (x4) at (1,1)  [label=right:$x_4$] {};
 \node (x5) at (1,0)  [label=right:$x_5$] {};
 
 \draw [thick] (x0) -- (x1);
 \draw [thick] (x1) -- (x2);
 \draw [thick] (x1) -- (x4);
 \draw [thick] (x2) -- (x3);
 \draw [thick] (x3) -- (x4);
 \draw [thick] (x4) -- (x5);
 \draw [thick] (x1) -- (x3);
\end{tikzpicture}
\caption{$X_{96}$}
\end{subfigure}
\hfill 
   \begin{subfigure}{0.3\textwidth}
        \centering
   \begin{tikzpicture}
 \tikzstyle{every node}=[circle, fill=black, inner sep=2pt, draw=none];
 \node (x0) at (0.5,2)  [label=above:$x_0$] {};
 \node (x1) at (0,1)  [label=left:$x_1$] {};
 \node (x2) at (0,0)  [label=left:$x_2$] {};
 \node (x3) at (1,0)  [label=right:$x_3$] {};
 \node (x4) at (1,1)  [label=right:$x_4$] {};
 
 \draw [thick] (x0) -- (x1);
 \draw [thick] (x0) -- (x4);
 \draw [thick] (x1) -- (x2);
 \draw [thick] (x1) -- (x4);
 \draw [thick] (x2) -- (x3);
 \draw [thick] (x3) -- (x4);
\end{tikzpicture}
\caption{House}\label{fig:house}
\end{subfigure}
\hfill
   \begin{subfigure}{0.3\textwidth}
        \centering
   \begin{tikzpicture}
 \tikzstyle{every node}=[circle, fill=black, inner sep=2pt, draw=none];
 \node (x0) at (-0.5,2)  [label=left:$x_0$] {};
 \node (x1) at (0.5,2)  [label=right:$x_1$] {};
 \node (x2) at (1,1)  [label=right:$x_2$] {};
 \node (x3) at (1,0)  [label=right:$x_3$] {};
 \node (x4) at (0,0)  [label=left:$x_4$] {};
 \node (x5) at (0,1)  [label=left:$x_5$] {};
 
 \draw [thick] (x0) -- (x1);
 \draw [thick] (x0) -- (x5);
 \draw [thick] (x1) -- (x2);
 \draw [thick] (x1) -- (x5);
 \draw [thick] (x2) -- (x3);
 \draw [thick] (x2) -- (x5);
 \draw [thick] (x3) -- (x4);
 \draw [thick] (x4) -- (x5);
\end{tikzpicture}
\caption{$X_5$}\label{fig:x5}
\end{subfigure}
\hfill 
 \begin{subfigure}{0.3\textwidth}
        \centering
   \begin{tikzpicture}
 \tikzstyle{every node}=[circle, fill=black, inner sep=2pt, draw=none];
 \node (x0) at (0.5,2)  [label=above:$x_0$] {};
 \node (x1) at (0,1)  [label=left:$x_1$] {};
 \node (x2) at (0,0)  [label=left:$x_2$] {};
 \node (x3) at (1,0)  [label=right:$x_3$] {};
 \node (x4) at (1,1)  [label=right:$x_4$] {};
  \node (x5) at (1.5,2)  [label=right:$x_5$] {};
 \draw [thick] (x0) -- (x1);
 \draw [thick] (x0) -- (x4);
 \draw [thick] (x1) -- (x2);
 \draw [thick] (x1) -- (x4);
 \draw [thick] (x2) -- (x3);
 \draw [thick] (x3) -- (x4);
 \draw [thick] (x4) -- (x5);
\end{tikzpicture}
\caption{$\overline{X_{58}}$}
\end{subfigure}
\hfill 
\begin{subfigure}{0.3\textwidth}
        \centering
   \begin{tikzpicture}
 \tikzstyle{every node}=[circle, fill=black, inner sep=2pt, draw=none];
 \node (x0) at (0.5,2.5)  [label=above:$x_0$] {};
 \node (x1) at (0.5,2)  [label=right:$x_1$] {};
 \node (x2) at (1,1)  [label=right:$x_2$] {};
 \node (x3) at (1,0)  [label=right:$x_3$] {};
 \node (x4) at (0,0)  [label=left:$x_4$] {};
 \node (x5) at (0,1)  [label=left:$x_5$] {};
 
 \draw [thick] (x0) -- (x1);
 \draw [thick] (x1) -- (x2);
 \draw [thick] (x1) -- (x5);
 \draw [thick] (x2) -- (x3);
 \draw [thick] (x2) -- (x5);
 \draw [thick] (x3) -- (x4);
 \draw [thick] (x4) -- (x5);
\end{tikzpicture}
\caption{Antenna}
\end{subfigure}
\hfill
\begin{subfigure}{0.3\textwidth}
        \centering
   \begin{tikzpicture}
 \tikzstyle{every node}=[circle, fill=black, inner sep=2pt, draw=none];
 \node (x0) at (1,2)  [label=above:$x_0$] {};
 \node (x1) at (1.5,1.5)  [label=right:$x_1$] {};
 \node (x2) at (1,1)  [label=right:$x_2$] {};
 \node (x3) at (1,0)  [label=right:$x_3$] {};
 \node (x4) at (0,0)  [label=left:$x_4$] {};
 \node (x5) at (0,1)  [label=left:$x_5$] {};
 \node (x6) at (0.5,1.5)  [label=left:$x_6$] {};
 
 \draw [thick] (x0) -- (x1);
 \draw [thick] (x0) -- (x6);
 \draw [thick] (x1) -- (x2);
 \draw [thick] (x5) -- (x6);
 \draw [thick] (x2) -- (x6);
 \draw [thick] (x2) -- (x3);
 \draw [thick] (x2) -- (x5);
 \draw [thick] (x3) -- (x4);
 \draw [thick] (x4) -- (x5);
\end{tikzpicture}
\caption{$F$}\label{fig:f},
\end{subfigure}
\hfill 
\caption{Graphs used to describe the graph classes}\label{fig:graph}
\end{figure}

\begin{table}[htp]
    \centering
\begin{tabular}{c|c c c c c}
        \toprule
        \diagbox{$\mathbf{A}$}{$\mathbf{B}$}& $\mathbf{SP}$ & $\mathbf{IP}$ & $\mathbf{P}$ & $\mathbf{W}$ & $\mathbf{m_3}$  \\
        \hline
        $\mathbf{SP}$ & $\mathbf{g}$-$\mathbf{Ch}$ \cite{Alcon2016} & $\mathbf{Ch}$ \cite{Alcon2016} & $\mathbf{Pt^-}$ \cite{Alcon2016} & $\mathbf{Sup}$ \cite{Alcon2016} &   \\
        $\mathbf{IP}$ & $\mathbf{Ch}$ \cite{Alcon2016} & $\mathbf{Ch}$ \cite{Alcon2016} & $\mathbf{Pt^-}$ \cite{Alcon2016} &  $\mathbf{Sup}$ \cite{Alcon2016} & $\text{HHD-free}$ \cite{TSB}  \\
        $\mathbf{P}$ & $\mathbf{Ch}$ \cite{Alcon2016} & $\mathbf{Ch}$ \cite{Alcon2016} & $\mathbf{Pt^-}$ \cite{Alcon2016} &  $\mathbf{Sup}$ \cite{Alcon2016} & $\text{HHD-free}$ \cite{TSB}  \\
        $\mathbf{TW}$ & $\mathbf{Ch}$ \cite{Alcon2016} & $\mathbf{Ch}$ \cite{Alcon2016} & $\mathbf{Pt^-}$ \cite{Alcon2016} &  $\mathbf{Sup}$ \cite{Alcon2016} &  \\
        $\mathbf{W}$ & $\mathbf{Ch}$ \cite{Alcon2016} & $\mathbf{Ch}$ \cite{Alcon2016} & $\mathbf{Pt^-}$ \cite{Alcon2016} &  $\mathbf{Sup}$ \cite{Alcon2016} & $\text{HHD-free}$ \cite{TSB}  \\
        $\mathbf{m_3}$ &  & $(a)$ \cite{TSB} & $(a)$ \cite{TSB} & $(b)$ \cite{TSB}  \\
        $\mathbf{WTW}$ & $\mathbf{Ch}$ \cite{Alcon2016} & $\mathbf{Ch}$ \cite{Alcon2016} & $\mathbf{Pt^-}$ \cite{Alcon2016} & $\mathbf{Sup}$ \cite{Alcon2016} &  \\
        \bottomrule
    \end{tabular}
     \caption{We denote by $\mathbf{Ch}$ the class of chordal graphs, by $\mathbf{Int}$ the class of interval graphs, by $\mathbf{Sup}$ the class of superfragile graphs, and by $\mathbf{Pt^-}$ the class $\mathbf{Ptolematic^-}$. Here, $(a)=\left\{\text{hole}, D, Antenna, X_5 \right\}$-free, $(b)=\left\{P_4,A, \overline{gem\cup K_2}, C_5, \overline{X_{58}},X_{96}, F_3 \right\}$-free. The definitions of these classes can be found in \cite{TSB}.} 
     \label{table1}
\end{table}

Naturally, Tondato raised a problem after the characterization of $\text{HHD-free}$ graphs in \cite{TSB}, which is defined as the class of graphs containing no house, hole, or domino (see Figure~\ref{fig:graph}) as induced subgraphs. The problem is stated as follows:
\begin{problem}[\citet{TSB}]\label{prob:tsb}
    Do $\mathbf{A}/\mathbf{m_3}$ and $\mathbf{m_3}/\mathbf{A}$, for $\mathbf{A}\in \left\{\mathbf{l_k},\mathbf{SP},\mathbf{TW},\mathbf{WTW} \right\}$ give rise to characterize class of graphs? 
\end{problem}

In this paper, we mainly study the domination between different walk types and $m_3$-paths, and show how these give rise to characterizations of graph classes. 
The main results and conclusions of the above problem are listed and proved in Sections \ref{Result1} and \ref{Result2}. We characterize the classes of graphs of $\mathbf{A}/\mathbf{m_3}$ and part of $\mathbf{m_3}/\mathbf{A}$, for $\mathbf{A}\in \left\{\mathbf{l_k},\mathbf{SP},\mathbf{TW},\mathbf{WTW} \right\}$. The topics that may be motivating for future work are discussed in Section \ref{Con}.

\section{Classes $\mathbf{A}/\mathbf{m_3}$ for $\mathbf{A}\in \left\{\mathbf{l}_2,\mathbf{l}_3,\mathbf{SP},\mathbf{TW},\mathbf{WTW} \right\}$}\label{Result1}
In this section, we will prove that $\mathbf{A}/\mathbf{m_3}=\text{HHD-free}$ for $\mathbf{A}\in \{\mathbf{l}_2,\mathbf{l}_3,\mathbf{SP},\mathbf{TW}$, $\mathbf{WTW} \}$.
%
%
%
First, since $\mathbf{m_3}(u, v) \subseteq \mathbf{IP}(u, v) \subseteq \mathbf{TW}(u, v) \subseteq \mathbf{WTW}(u, v) \subseteq \mathbf{W}(u, v)$ by Remark \ref{rem1}, it follows that $\mathbf{W}/\mathbf{m_3}\subseteq \mathbf{WTW}/\mathbf{m_3}\subseteq \mathbf{TW}/\mathbf{m_3}\subseteq \mathbf{IP}/\mathbf{m_3}$.
By Theorem \ref{thm:hhd}, $\mathbf{IP}/\mathbf{m_3}=\mathbf{W}/\mathbf{m_3}=\text{HHD-free}$, which immediately yields the following corollary.
\begin{corollary}
    $\mathbf{WTW}/\mathbf{m_3}=\mathbf{TW}/\mathbf{m_3}=\text{HHD-free}$.
\end{corollary}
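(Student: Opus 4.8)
The plan is to exploit the inclusion-reversing behaviour of the domination operator together with the squeeze already recorded in the paragraph preceding the statement. The only genuine point to verify is how the class $\mathbf{A}/\mathbf{m_3}$ behaves as the dominating family $\mathbf{A}$ grows. Fix non-adjacent vertices $u,v$ and suppose $\mathbf{A}(u,v)\subseteq \mathbf{A}'(u,v)$ for every such pair. By Definition~\ref{def2}, membership of $G$ in $\mathbf{A}'/\mathbf{m_3}$ requires that \emph{every} walk in $\mathbf{A}'(u,v)$ dominate every $uv$-$m_3$ path; since $\mathbf{A}'(u,v)$ contains all of $\mathbf{A}(u,v)$ and possibly more, this is a formally stronger requirement than the one defining $\mathbf{A}/\mathbf{m_3}$. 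Hence $\mathbf{A}'/\mathbf{m_3}\subseteq \mathbf{A}/\mathbf{m_3}$: enlarging the dominating class can only shrink the resulting graph class.

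Next I would instantiate this monotonicity along the chain $\mathbf{IP}(u,v)\subseteq \mathbf{TW}(u,v)\subseteq \mathbf{WTW}(u,v)\subseteq \mathbf{W}(u,v)$ supplied by Remark~\ref{rem1}, which yields
\[
\mathbf{W}/\mathbf{m_3}\subseteq \mathbf{WTW}/\mathbf{m_3}\subseteq \mathbf{TW}/\mathbf{m_3}\subseteq \mathbf{IP}/\mathbf{m_3}.
\]
This is exactly the sandwich noted just before the statement, and it places both $\mathbf{WTW}/\mathbf{m_3}$ and $\mathbf{TW}/\mathbf{m_3}$ between the two extreme terms of the chain.

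Finally I would invoke Theorem~\ref{thm:hhd}, which asserts $\mathbf{IP}/\mathbf{m_3}=\mathbf{W}/\mathbf{m_3}=\text{HHD-free}$. Because the two endpoints of the chain coincide and equal $\text{HHD-free}$, every class trapped between them must also equal $\text{HHD-free}$; in particular $\mathbf{WTW}/\mathbf{m_3}=\mathbf{TW}/\mathbf{m_3}=\text{HHD-free}$, as claimed.

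There is essentially no obstacle here: the corollary is a pure squeeze, and all of the substantive content is carried by Theorem~\ref{thm:hhd}. The one thing one must not get backwards is the inclusion-reversing direction of the operator $\mathbf{A}\mapsto \mathbf{A}/\mathbf{m_3}$; once that orientation is fixed, the conclusion is immediate and requires no further case analysis.
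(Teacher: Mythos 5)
Your argument is correct and is exactly the paper's: the inclusion chain from Remark~\ref{rem1} makes $\mathbf{A}\mapsto\mathbf{A}/\mathbf{m_3}$ inclusion-reversing, sandwiching $\mathbf{WTW}/\mathbf{m_3}$ and $\mathbf{TW}/\mathbf{m_3}$ between $\mathbf{W}/\mathbf{m_3}$ and $\mathbf{IP}/\mathbf{m_3}$, which coincide and equal HHD-free by Theorem~\ref{thm:hhd}. No gaps; your explicit justification of the monotonicity is a small addition the paper leaves implicit.
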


Now, we consider the remaining classes.
\begin{theorem}
$\mathbf{l_2}/\mathbf{m_3}=\mathbf{l_3}/\mathbf{m_3}=\mathbf{SP}/\mathbf{m_3}=\text{HHD-free}$.
\end{theorem}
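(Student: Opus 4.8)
The plan is to establish the three equalities by squeezing, reducing everything to the single nontrivial inclusion $\mathbf{l_2}/\mathbf{m_3}\subseteq\text{HHD-free}$ and invoking Theorem~\ref{thm:hhd}.

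First I would record the pointwise inclusions of the walk-sets. Between non-adjacent $u,v$ an $\mathbf{l_2}$-path has length exactly $2$ (length $\le 1$ is impossible and length $\le 2$ is required), so it realizes $d(u,v)=2$ and is therefore a geodesic; hence $\mathbf{l_2}(u,v)\subseteq\mathbf{SP}(u,v)$. Combining this with $\mathbf{SP}\subseteq\mathbf{IP}$ and with $\mathbf{l_2}\subseteq\mathbf{l_3}\subseteq\mathbf{IP}$ from Remark~\ref{rem1}, and using the elementary monotonicity that $\mathbf{A_1}(u,v)\subseteq\mathbf{A_2}(u,v)$ for all $u,v$ forces $\mathbf{A_2}/\mathbf{m_3}\subseteq\mathbf{A_1}/\mathbf{m_3}$ (demanding that a \emph{larger} stock of walks all dominate is a stronger requirement, and the vacuous-emptiness convention respects this), Theorem~\ref{thm:hhd} yields
\[
\text{HHD-free}=\mathbf{IP}/\mathbf{m_3}\subseteq\mathbf{SP}/\mathbf{m_3}\subseteq\mathbf{l_2}/\mathbf{m_3},\qquad \mathbf{IP}/\mathbf{m_3}\subseteq\mathbf{l_3}/\mathbf{m_3}\subseteq\mathbf{l_2}/\mathbf{m_3}.
\]
So it remains only to prove $\mathbf{l_2}/\mathbf{m_3}\subseteq\text{HHD-free}$; this traps all three classes between $\text{HHD-free}$ and itself and finishes the theorem.

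For that inclusion I would argue by contraposition: assuming $G$ contains an induced house, hole, or domino, I produce non-adjacent $u,v$, a walk $W\in\mathbf{l_2}(u,v)$ and a walk $W'\in\mathbf{m_3}(u,v)$ with $W$ not dominating $W'$. The key simplification is that a length-$2$ path $W:u,w,v$ has a single internal vertex $w$, so $W$ dominates $W'$ if and only if every internal vertex of $W'$ equals $w$ or is adjacent to $w$. Thus it suffices to find, inside each forbidden configuration, non-adjacent $u,v$ with a common neighbor $w$ (giving $W=u,w,v$) together with an induced $uv$-path of length at least $3$ possessing an internal vertex $z$ with $z\neq w$ and $z\notin N(w)$; because each configuration is induced, all these (non-)adjacencies are inherited by $G$, so $z$ is genuinely undominated.

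The three witnesses are immediate. For a hole $C_n$ ($n\ge 5$) on $x_0,\dots,x_{n-1}$ (Figure~\ref{fig:hole}), take $u=x_0$, $v=x_2$, $w=x_1$, and let $W'=x_0,x_{n-1},\dots,x_2$ be the complementary arc; its internal vertex $x_3$ satisfies $x_3\neq x_1$ and $x_1x_3\notin E$. For the house (Figure~\ref{fig:house}) take $u=x_0$, $v=x_2$, $w=x_1$, and $W'=x_0,x_4,x_3,x_2$; the internal vertex $x_3$ is not adjacent to $x_1$. For the domino (Figure~\ref{fig:domino}) take $u=x_0$, $v=x_2$, $w=x_1$, and $W'=x_0,x_5,x_4,x_3,x_2$; the internal vertex $x_5$ (equally $x_3$) is not adjacent to $x_1$. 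In each case $W$ fails to dominate $W'$, so $G\notin\mathbf{l_2}/\mathbf{m_3}$, giving $\mathbf{l_2}/\mathbf{m_3}\subseteq\text{HHD-free}$ and completing the proof. The only genuine content lies in verifying these three witness pairs — that each $W'$ is induced of length $\ge 3$ and that the exhibited internal vertex is neither $w$ nor a neighbor of $w$ — which I expect to be the main (though routine) obstacle; the monotonicity direction and the vacuous-case convention also merit a careful word.
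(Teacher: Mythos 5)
Your proposal is correct and follows essentially the same route as the paper: squeeze $\mathbf{SP}/\mathbf{m_3}$ and $\mathbf{l_3}/\mathbf{m_3}$ between $\text{HHD-free}$ and $\mathbf{l_2}/\mathbf{m_3}$ using Remark~\ref{rem1}, Theorem~\ref{thm:hhd}, and the observation that an $l_2$-path between non-adjacent vertices is a geodesic, then certify $\mathbf{l_2}/\mathbf{m_3}\subseteq\text{HHD-free}$ with the same witnesses $(u,v)=(x_0,x_2)$ and the same undominating $l_2$-path $x_0,x_1,x_2$ in the house, hole, and domino. The only cosmetic difference is which undominated internal vertex you name in the hole ($x_3$ rather than the paper's $x_n$), and that you anchor the lower bound at $\mathbf{IP}/\mathbf{m_3}$ rather than $\mathbf{W}/\mathbf{m_3}$; both are immaterial.
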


\begin{proof}
    By Theorem \ref{thm:hhd} and Remark \ref{rem1}, we have $\text{HHD-free}=\mathbf{W}/\mathbf{m_3}\subseteq \mathbf{l_3}/\mathbf{m_3}\subseteq \mathbf{l_2}/\mathbf{m_3}$. Now we only need to prove $\mathbf{l_2}/\mathbf{m_3}\subseteq $ $\text{HHD-free}$.
    
    As shown in Figure \ref{fig:house}, the house has a pair of non-adjacent vertices $u,v$ and a $uv$-$\mathbf{m_3}$ path: $u=x_0,x_4,x_3,x_2=v$ which is not dominated by the $uv$-$l_2$-path: $u=x_0,x_1,x_2=v$ ($x_3$ in the $uv$-$m_3$ path is not dominated). 
    
    As shown in Figure \ref{fig:hole}, a hole has a pair of non-adjacent vertices $u,v$ and a $uv$-$\mathbf{m_3}$ path: $u=x_0,x_n,x_{n-1},\ldots,x_2=v$ which is not dominated by the $uv$-$l_2$-path: $u=x_0,x_1,x_2=v$ ($x_n$ in the $uv$-$m_3$ path is not dominated). 
    
    As shown in Figure \ref{fig:domino}, the domino ($D$) has a pair of non-adjacent vertices $u,v$ and a $uv$-$\mathbf{m_3}$ path: $u=x_0,x_5,x_4,x_3,x_2=v$ which is not dominated by the $uv$-$l_2$-path: $u=x_0,x_1,x_2=v$ ($x_3$ in the $uv$-$m_3$ path is not dominated).

    Hence, we have $\mathbf{l_2}/\mathbf{m_3}=\mathbf{l_3}/\mathbf{m_3}=\text{HHD-free}$.
    
    On the other hand, by Theorem \ref{thm:hhd} and Remark \ref{rem1}, we have $\text{HHD-free}=\mathbf{IP}/\mathbf{m_3}\subseteq \mathbf{SP}/\mathbf{m_3}$. Note that a $uv$-$l_2$-path must be a $uv$-shortest path since we only consider pairs of non-adjacent vertices. Hence $\mathbf{SP}/\mathbf{m_3} \subseteq \mathbf{l_2}/\mathbf{m_3}=\text{HHD-free}$, and thus we can get $\mathbf{SP}/\mathbf{m_3}=\text{HHD-free}$.
%
\end{proof}


\section{Classes $\mathbf{m_3}/\mathbf{A}$ for $\mathbf{A}\in \left\{\mathbf{l_k},\mathbf{SP},\mathbf{TW},\mathbf{WTW} \right\}$}\label{Result2}
In this section we will study dominations between $m_3$-paths and different types of walks like shortest paths, toll walks, weakly toll walks and $l_k$-paths. As a consequence of Remark \ref{rem1}, $\mathbf{m_3}/\mathbf{W}\subseteq \mathbf{m_3}/\mathbf{WTW}\subseteq \mathbf{m_3}/\mathbf{TW}\subseteq \mathbf{m_3}/\mathbf{IP}\subseteq \mathbf{m_3}/\mathbf{SP}$. 
It is easy to see the following conclusion which we can get by combining Theorems \ref{thm:m3w}--\ref{thm:m3ip}.
\begin{theorem}\label{thm:11}
    $\left\{P_4,A, \overline{\text{gem}\cup K_2}, C_5, \overline{X_{58}},X_{96}, F_3 \right\}$-$\text{free}=\mathbf{m_3/W}\subseteq \mathbf{m_3}/\mathbf{WTW}\subseteq \mathbf{m_3}/\mathbf{TW} \subseteq \mathbf{m_3/IP}=\left\{\text{hole}, D, \text{Antenna}, X_5 \right\}$-$\text{free}\subseteq \mathbf{m_3}/\mathbf{SP}$.
\end{theorem}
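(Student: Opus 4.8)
The plan is to establish Theorem~\ref{thm:11} entirely from the three cited results Theorems~\ref{thm:m3w}--\ref{thm:m3ip} together with the inclusion chain at the start of Section~\ref{Result2}, so that no new graph-theoretic argument is needed. The first step is to record the two endpoints. By Theorem~\ref{thm:m3w} we have $\mathbf{m_3}/\mathbf{W}=\{P_4,A,\overline{\mathrm{gem}\cup K_2},C_5,\overline{X_{58}},X_{96},F_3\}$-free, and by Theorem~\ref{thm:m3ip} we have $\mathbf{m_3}/\mathbf{IP}=\{\text{hole},D,\text{Antenna},X_5\}$-free. These identify the leftmost and rightmost members of the displayed chain.

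The second step is to justify the three set inclusions in the middle. These follow mechanically from Definition~\ref{def2} combined with Remark~\ref{rem1}. Indeed, since for every non-adjacent pair $u,v$ we have $\mathbf{W}(u,v)\supseteq \mathbf{WTW}(u,v)\supseteq \mathbf{TW}(u,v)\supseteq \mathbf{IP}(u,v)$, a graph in which every $m_3$-path dominates every walk in the larger class automatically has every $m_3$-path dominating every walk in the smaller class; hence fixing the dominating class $\mathbf{m_3}$ and shrinking the dominated class can only enlarge the graph class. This gives
\[
\mathbf{m_3}/\mathbf{W}\subseteq \mathbf{m_3}/\mathbf{WTW}\subseteq \mathbf{m_3}/\mathbf{TW}\subseteq \mathbf{m_3}/\mathbf{IP},
\]
which is precisely the relevant portion of the chain already stated. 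Splicing the two endpoint equalities from Step~1 into this chain yields the full statement of Theorem~\ref{thm:11}.

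Since this is a purely formal consequence of already-proved results, there is essentially no substantive obstacle: the only thing to be careful about is the direction of the inclusions, namely that enlarging the dominated class $\mathbf{B}$ shrinks $\mathbf{A}/\mathbf{B}$ (more walks must be dominated, so fewer graphs qualify), which is the monotonicity implicit in Remark~\ref{rem1} and Definition~\ref{def2}. I would therefore present the proof as a one-line deduction: apply Theorems~\ref{thm:m3w} and \ref{thm:m3ip} to pin down the endpoints, and invoke the monotonicity of $\mathbf{A}/\mathbf{B}$ in $\mathbf{B}$ (from the inclusions of Remark~\ref{rem1}) to insert $\mathbf{m_3}/\mathbf{WTW}$ and $\mathbf{m_3}/\mathbf{TW}$ between them. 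The genuine mathematical content of determining the intermediate classes $\mathbf{m_3}/\mathbf{WTW}$ and $\mathbf{m_3}/\mathbf{TW}$ exactly is deliberately left open here and is what the remainder of Section~\ref{Result2} must address.
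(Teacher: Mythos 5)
Your proposal is correct and matches the paper's own (essentially one-line) argument: the paper likewise obtains the endpoints from Theorems~\ref{thm:m3w} and~\ref{thm:m3ip} and inserts the middle inclusions via the monotonicity of $\mathbf{m_3}/\mathbf{B}$ in the dominated class $\mathbf{B}$, which follows from the containments $\mathbf{IP}(u,v)\subseteq\mathbf{TW}(u,v)\subseteq\mathbf{WTW}(u,v)\subseteq\mathbf{W}(u,v)$ of Remark~\ref{rem1}. You have also correctly identified the only delicate point, namely the direction of the monotonicity.
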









By Theorem~\ref{thm:11}, we know that if a graph is $\{\text{hole}, D, \text{Antenna}, X_5\}$-free, then it belongs to $\mathbf{m_3}/\mathbf{SP}$. In fact, we can do better. Notice that the Antenna graph is an induced subgraph of $F$. Therefore, by replacing Antenna with $F$, we can obtain an improved sufficient condition.

\begin{theorem}\label{thm:suff}
    If $G$ is $\left\{\text{hole},D,X_5,F \right\}$-\text{free}, then $G\in \mathbf{m_3}/\mathbf{SP}$.
\end{theorem}
\begin{proof}
    Let $G$ be a connected simple $\left\{\text{hole},D,X_5,F \right\}$-free graph and suppose $G\notin \mathbf{m_3}/\mathbf{SP}$. Then there exist two non-adjacent vertices $u$ and $v$, a $uv$-$m_3$ path $W: u=x_0,\ldots,x_n=v$ $(n\ge3)$ and a $uv$-shortest path $W': u=x_0',\ldots,x_h'=v$ satisfying that $W$ does not dominate $W'$. Thus, there is some internal vertex of $W'$ that is neither a vertex of $W$ nor adjacent to any internal vertex of $W$.

    Let $k$ be the first index such that $x_k'$ is neither a vertex of $W$ nor adjacent to any internal vertex of $W$. We consider the following cases.

    \textbf{Case 1.} Suppose $k=1$ (by symmetry $k=h-1$). We have $x_2'\not\in W$. 
    We observe that $u$ is not adjacent to $x_2'$ since $W'$ is a $uv$-shortest path. Let us consider two cases depending on whether $x_1$ is adjacent to $x_2'$.

    \textbf{Case 1.1.} Assume that $x_1$ is not adjacent to $x_2'$. Let $p$ and $q$ be the first indices such that $1\leq p\leq n$, $2\leq q\leq h$, and $x_p$ is adjacent to $x_q'$ or $x_p=x_q'$. Clearly $G[W[u,x_p]\cup W'[u,x_q']]$ is a hole, a contradiction.

    \textbf{Case 1.2.} Suppose that $x_1$ is adjacent to $x_2'$. Note that $G[\left\{u,x_1,x_1',x_2' \right\}]\cong C_4$. Since $W$ is a $uv$-$m_3$ path, $x_2'\neq v$.
    Thus, $h\ge 3$.
    Note that $x_3'\neq x_2$ since $W'$ is a shortest $uv$-path. Since $x_2'\not\in W$, we obtain $x_2\neq x_2'$.

    \textbf{Case 1.2.1.} Suppose that $x_2$ is adjacent to $x_2'$. Now, $G[\left\{u,x_1,x_2,x_1',x_2' \right\}]$ is a house. 
    Note that if $x_3$ is adjacent to $x_2'$, then $G[\left\{u, x_1,x_2,x_3,x_1',x_2' \right\}]\cong X_5$, a contradiction. Hence, $x_3$ is not adjacent to $x_2'$ and $x_3'\neq x_3$. 
    
    Observe that $x_3'\neq x_2$ since $W'$ is a $uv$-shortest path. If $x_3'$ is adjacent to $x_2$, then we get an induced $X_5$, a contradiction. Hence $x_3'$ is not adjacent to $x_2$.

    Note that if $x_3$ is adjacent to $x_3'$, then $G[\left\{u,x_1,x_2,x_3,x_1',x_2',x_3' \right\}]\cong F$, hence $x_3x_3'\not\in E(G)$. Then, let $p$ and $q$ be the first indices such that $3\leq p\leq n$, $3\leq q\leq h$, and $x_p$ is adjacent to $x_q'$ or $x_p=x_q'$. Clearly $G[W[x_2,x_p]\cup W'[x_2',x_q']]$ is a hole or $G[\left\{u,x_1,x_2,x_3,x_4,x_1',x_2' \right\}]\cong F$ or $G[\left\{u,x_1,x_2,x_3',x_4',x_1',x_2' \right\}]\cong F$, a contradiction.

    \textbf{Case 1.2.2.} Suppose $x_2$ is not adjacent to $x_2'$. Now $x_3'\neq x_1,x_2$ and $x_3'$ is not adjacent to $x_1$ since $W'$ is a $uv$-shortest path. We observe that $x_3'$ is not adjacent to $x_2$ since otherwise $G[\left\{u, x_1,x_2,x_3',x_1',x_2' \right\}]\cong D$. But now let $p$ and $q$ be the first indices such that $2\leq p\leq n$, $3\leq q\leq h$, and $x_p$ is adjacent to $x_q'$ or $x_p=x_q'$. Clearly $G[W[x_1,x_p]\cup W'[x_2',x_q']]$ is a hole, a contradiction.

    \textbf{Case 2.} Suppose $k\neq 1,h-1$. Now $x_{k-1}', x_{k+1}'\not\in W$. By the choice of $k$, let $i$ be the last index such that $x_{k-1}'$ is adjacent to $x_i$. Note that $i\neq n$ and $k+1\neq h$. 
    Let us consider two cases depending on whether $x_{k+1}'$ is adjacent to a vertex of $W[x_i,v]$.

    \textbf{Case 2.1.} Suppose $x_{k+1}'$ is not adjacent to any vertex of $W[x_i,v]$. Let $p$ and $q$ be the first indices such that $i\leq p\leq n$, $k+2\leq q\leq h$, and $x_p$ is adjacent to $x_q'$ or $x_p=x_q'$. Clearly $G[W[x_i,x_p]\cup W'[x_{k-1}',x_q']]$ is a hole, a contradiction.

    \textbf{Case 2.2.} Assume that $x_{k+1}'$ is adjacent to some vertex of $W[x_i,v]$. Let $j$ be the first index such that $x_{k+1}'$ is adjacent to $x_j$. If $j>i$, then $G[W[x_i,x_j]\cup W'[x_{k-1}',x_{k+1}']]$ is a hole, a contradiction. Hence $i=j$ and now $G[\left\{x_i,x_{k-1}',x_k',x_{k+1}' \right\}]\cong C_4$.

     \textbf{Case 2.2.1.} Suppose $i=n-1$. As $k\neq h-1$, there exists $x_{k+2}'$ which may be equal to $v$. In fact, whether $x_{k+2}'$ is equal to $v$, $W'[u,x_{k-1}']\cup \left\{x_{n-1},v \right\}$ is a $uv$-path shorter than $W'$, which is impossible.

    \textbf{Case 2.2.2.} Suppose that $i<n-1$. Then there exist $x_{i+1}$ and $x_{i+2}$ in $W$. Note that $x_{i+2}$ may be $v$.

    \textbf{Case 2.2.2.1.} First, assume that $x_{k+1}'$ is adjacent to $x_{i+1}$ and $x_{i+2}$. Then $G[\left\{x_i,x_{i+1},x_{i+2},x_{k-1}',x_k',x_{k+1}' \right\}]\cong X_5$, a contradiction.

    \textbf{Case 2.2.2.2.} Now suppose $x_{k+1}'$ is adjacent to $x_{i+1}$ but it is not adjacent to $x_{i+2}$. We observe that $x_{k+2}'\neq x_{i+2}$. Since $G$ contains no $X_5$, $x_{k+2}'$ is not adjacent to $x_{i+1}$. And since $G$ contains no induced $F$, $x_{k+2}'$ is not adjacent to $x_{i+2}$. Now let $p$ and $q$ be the first indices such that $i+2\leq p\leq n$, $k+2\leq q\leq h$, and $x_p$ is adjacent to $x_q'$ or $x_p=x_q'$. Clearly $G[W[x_{i+1},x_p]\cup W'[x_{k+1}',x_q']]$ is a hole, a contradiction.

    \textbf{Case 2.2.2.3.} Assume $x_{k+1}'$ is adjacent to $x_{i+2}$ but it is not adjacent to $x_{i+1}$. Then $G[\left\{x_i,x_{i+1},x_{i+2},x_{k-1}',x_k',x_{k+1}' \right\}]\cong D$, a contradiction.

    \textbf{Case 2.2.2.4.} Finally, assume that $x_{k+1}'$ is not adjacent to $x_{i+1}$ or $x_{i+2}$. Hence $x_{k+2}'\neq x_{i+1},x_{i+2}$. If $x_{k+2}'$ is adjacent to $x_{i+1}$, then $x_{k+2}'$ must be adjacent to $x_{i}$ since $G$ contains no induced $D$. But now $G[\left\{x_i,x_{i+1},x_{k-1}',x_k',x_{k+1}',x_{k+2}' \right\}]\cong X_5$, a contradiction. Hence $x_{k+2}'$ is not adjacent to $x_{i+1}$.

    If $x_{k+2}'$ is not adjacent to $x_{i}$, then it is obvious that there exists an induced hole, a contradiction. Hence $x_{k+2}'$ is adjacent to $x_{i}$. But now $W'$ is not a $uv$-shortest path which is impossible.

    Hence $G$ must contain at least one of $\left\{\text{hole},D,X_5,F \right\}$ as an induced subgraph. Therefore we obtain $\left\{\text{hole}, D, X_5,F\right\}$-free $\subseteq \mathbf{m_3}/\mathbf{SP}$. 
\end{proof}


Note that holes, dominoes ($D$), $X_5$, and $F$ do not belong to $\mathbf{m_3}/\mathbf{SP}$, which explains their exclusion from the sufficient condition. A natural question is whether the converse holds: must every graph in $\mathbf{m_3}/\mathbf{SP}$ be $\{\text{hole}, D, X_5, F\}$-free? Unlike most graph classes characterized by walk domination, this converse is false. Consequently, $\mathbf{m_3}/\mathbf{SP}$ is not hereditary (i.e., not closed under induced subgraphs), making a complete characterization via forbidden induced subgraphs impossible. To see this, consider the domino $D$ in Figure \ref{fig:graph}(d) and the graph $D'$ in Figure \ref{counter}. Although $D$ is an induced subgraph of $D'$, we have $D' \in \mathbf{m_3}/\mathbf{SP}$ while $D \notin \mathbf{m_3}/\mathbf{SP}$.

\begin{figure}[htbp]
\centering
\begin{tikzpicture}[
    scale=1.2,
    vertex/.style={circle, fill=black, inner sep=1.6pt},
    edge/.style={black, thick}
]
\node[vertex] (a) at (0,2) [label=above:$a$] {};
\node[vertex] (b) at (2,2) [label=above:$b$] {};
\node[vertex] (c) at (4,2) [label=above:$c$] {};

\node[vertex] (d) at (0,0) [label=below:$d$] {};
\node[vertex] (e) at (2,0) [label=below:$e$] {};
\node[vertex] (f) at (4,0) [label=below:$f$] {};

\node[vertex] (g) at (1,1) [label=left:$g$] {};
\draw[edge] (a) -- (b);
\draw[edge] (b) -- (c);
\draw[edge] (a) -- (d);
\draw[edge] (c) -- (f);
\draw[edge] (d) -- (e);
\draw[edge] (e) -- (f);

\draw[edge] (a) -- (g);
\draw[edge] (d) -- (g);
\draw[edge] (g) -- (b);
\draw[edge] (g) -- (e);
\draw[edge] (b) -- (e);
\draw[edge] (g) -- (c);
\draw[edge] (g) -- (f);
\end{tikzpicture}
\caption{An example graph $D'$ demonstrating that the converse of Theorem \ref{thm:suff} does not hold.}
\label{counter}
\end{figure}

Although the converse of Theorem \ref{thm:suff} does not hold, we can still establish the following necessary condition for graphs in $\mathbf{m_3}/\mathbf{SP}$.

\begin{theorem}\label{thm:necess}
    If $G \in \mathbf{m_3}/\mathbf{SP}$, then $G$ is hole-free, and for every induced subgraph $H$ of $G$ isomorphic to $D, X_5$, or $F$, every pair of vertices in $H$ is at a distance of at most two in $G$.
\end{theorem}
\begin{proof}
Suppose $G\in \mathbf{m_3}/\mathbf{SP}$. As shown in Figure~\ref{fig:hole}, a hole has a pair of non-adjacent vertices $u, v$ and a $uv$-shortest path: $u = x_0, x_1, x_2 = v$ which is not dominated by the $uv$-$m_3$ path: $u = x_0,x_n, x_{n-1},\ldots, x_2 = v$ ($x_1$ in the $uv$-shortest path is not dominated). Hence $G$ must be hole-$\text{free}$.
Next, we consider the cases where $G$ contains at least one of $\{D, X_5, F\}$ as an induced subgraph $H$. 

If $H \cong D$, as shown in Figure~\ref{fig:domino}, we assume for a contradiction that there exists a pair of vertices with a distance of three in $H$ that remains at distance three in $G$. By symmetry, we may assume this pair is $u = x_0$ and $v = x_3$. Then $H$ has a $uv$-shortest path $W': u = x_0, x_5, x_4, x_3 = v$, which is not dominated by the $uv$-$m_3$ path $W: u = x_0, x_1, x_2, x_3 = v$ ($x_5$ in the $uv$-shortest path is not dominated), a contradiction.

If $H \cong X_5$, as shown in Figure~\ref{fig:x5}, since $(x_0, x_3)$ is the unique pair at distance three in $X_5$, we assume for a contradiction that the distance between $x_0$ and $x_3$ remains three in $G$. Then there exists a $uv$-shortest path $W': u = x_0, x_5, x_4, x_3 = v$ which is not dominated by the $uv$-$m_3$ path $W: u = x_0, x_1, x_2, x_3 = v$ ($x_4$ in the $uv$-shortest path is not dominated), a contradiction.

If $H \cong F$, as shown in Figure~\ref{fig:f}, we assume for a contradiction that there exists a pair of vertices with a distance of three in $H$ that remains at distance three in $G$. Without loss of generality, we may assume this pair is $u = x_1$ and $v = x_4$. Then there is a $uv$-shortest path $W': u = x_1, x_2, x_3, x_4 = v$ which is not dominated by the $uv$-$m_3$ path $W: u = x_1, x_0, x_6, x_5, x_4 = v$ ($x_3$ in the $uv$-shortest path is not dominated), a contradiction.
%
%
%
\end{proof}

The converse of Theorem \ref{thm:necess} does not hold. To demonstrate this, consider the graph $G$ illustrated in Figure~\ref{fig:counter_necess}. We can easily verify that $G$ satisfies all the necessary conditions described in Theorem \ref{thm:necess}. Note that the vertex subset $S = \{a, b, c, d, f, g\}$ induces a domino $D$, and any pair of vertices within $S$ has a distance of at most two in $G$.
However, $G \notin \mathbf{m_3}/\mathbf{SP}$. Consider the vertices $a$ and $e$ with $d_G(a,e) = 3$. The $ae$-shortest path $P'': a,g,f,e$ is not dominated by the $ae$-$\mathbf{m_3}$ path $P': a,b,c,d,e$ ($g$ in the $ae$-shortest path is not dominated).

\begin{figure}[htbp]
\centering
\begin{tikzpicture}[
    scale=1.2,
    vertex/.style={circle, fill=black, inner sep=1.6pt},
    edge/.style={black, thick}
]
\node[vertex] (a) at (0,0) [label=left:$a$] {};
\node[vertex] (b) at (1,1) [label=above:$b$] {};
\node[vertex] (c) at (2,1) [label=above:$c$] {};
\node[vertex] (d) at (3,1) [label=above:$d$] {};
\node[vertex] (e) at (4,0) [label=right:$e$] {};
\node[vertex] (f) at (2.5,-1) [label=below:$f$] {};
\node[vertex] (g) at (1.5,-1) [label=below:$g$] {};
\node[vertex] (h) at (1,0) [label=below:$h$] {};

\draw[edge] (a) -- (b);
\draw[edge] (b) -- (c);
\draw[edge] (c) -- (d);
\draw[edge] (d) -- (e);
\draw[edge] (e) -- (f);
\draw[edge] (a) -- (g);

\draw[edge] (a) -- (h);
\draw[edge] (d) -- (h);
\draw[edge] (h) -- (b);
\draw[edge] (g) -- (h);
\draw[edge] (b) -- (f);
\draw[edge] (h) -- (c);
\draw[edge] (g) -- (f);
\draw[edge] (h) -- (f);
\draw[edge] (d) -- (f);
\end{tikzpicture}
\caption{A counterexample graph demonstrating that the converse of Theorem \ref{thm:necess} does not hold.}
\label{fig:counter_necess}
\end{figure}

Finally, we consider the characterizations of the class $\mathbf{m_3}/\mathbf{l_k}$ for $k=2,3$. By Remark \ref{rem1}, $\mathbf{m_3}/\mathbf{IP}\subseteq \mathbf{m_3}/\mathbf{l_3}\subseteq \mathbf{m_3}/\mathbf{l_2}$. The case of $\mathbf{m_3}/\mathbf{l_2}$ is much easier.
\begin{theorem}\label{l2}
    $\mathbf{m_3}/\mathbf{l_2}=\text{hole-free}$.
\end{theorem}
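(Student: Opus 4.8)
The plan is to prove the two inclusions $\mathbf{m_3}/\mathbf{l_2}\subseteq\text{hole-free}$ and $\text{hole-free}\subseteq\mathbf{m_3}/\mathbf{l_2}$ separately. The first inclusion is the easy direction and follows the pattern already established in the earlier theorems: I would exhibit, for a hole $C_n$ ($n\ge 5$) drawn as in Figure~\ref{fig:hole} with vertices $x_0,x_1,\dots,x_{n-1}$, a concrete pair of non-adjacent vertices together with a $uv$-$l_2$-path that is not dominated by some $uv$-$m_3$ path. Taking $u=x_0$ and $v=x_2$, the path $u=x_0,x_1,x_2=v$ is an induced path of length $2$, hence a $uv$-$l_2$-path, while $u=x_0,x_{n-1},x_{n-2},\dots,x_2=v$ is a $uv$-$m_3$ path (length at least $3$ since $n\ge 5$). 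The internal vertex $x_1$ of the $l_2$-path is not on the $m_3$ path and, because the cycle is chordless, is adjacent only to $x_0$ and $x_2$, neither of which is an internal vertex of the $m_3$ path; so $x_1$ is undominated. This shows a hole does not belong to $\mathbf{m_3}/\mathbf{l_2}$, giving the first inclusion.

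For the reverse inclusion, I would argue by contradiction. Suppose $G$ is hole-free but $G\notin\mathbf{m_3}/\mathbf{l_2}$. Then there are non-adjacent $u,v$, a $uv$-$m_3$ path $W:u=x_0,x_1,\dots,x_n=v$ with $n\ge 3$, and a $uv$-$l_2$-path $W'$ that $W$ fails to dominate. Since any $l_2$-path joining non-adjacent vertices has length exactly $2$, write $W':u,w,v$ with $w$ the unique internal vertex. Non-domination means $w\notin W$ and $w$ is adjacent to no internal vertex of $W$. Note immediately that $w$ is adjacent to both $u$ and $v$.

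The key observation is now that $w$ together with the $m_3$ path $W$ forces a chordless cycle. Since $w$ is adjacent to $u=x_0$ and to $v=x_n$ but to no $x_i$ for $1\le i\le n-1$, the cycle $w,x_0,x_1,\dots,x_n,w$ has length $n+2\ge 5$. Because $W$ is an induced path, the only possible chords of this cycle are edges between $w$ and some $x_i$, and those have all been excluded. Hence the cycle is chordless with at least $5$ vertices, i.e. an induced hole, contradicting that $G$ is hole-free. The main (and only genuine) obstacle is the bookkeeping needed to be certain the cycle is induced: one must verify that $W$ contributes no chord (immediate from $W$ being an $m_3$, hence induced, path) and that $w$ contributes no chord (exactly the non-domination hypothesis), and that the vertices $w,x_0,\dots,x_n$ are genuinely distinct, which follows from $w\notin W$ and the fact that an $m_3$ path has distinct vertices. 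Combining both inclusions yields $\mathbf{m_3}/\mathbf{l_2}=\text{hole-free}$.
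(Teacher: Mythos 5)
Your proposal is correct and follows essentially the same route as the paper's proof: the same hole counterexample for the inclusion $\mathbf{m_3}/\mathbf{l_2}\subseteq\text{hole-free}$, and for the reverse inclusion the same observation that the undominated middle vertex $w$ of the $\mathbf{l_2}$-path together with the induced $m_3$ path of length at least three closes up into a chordless cycle on at least five vertices. Your write-up merely makes explicit the chord and distinctness bookkeeping that the paper compresses into the single sentence ``Since $W$ is an $m_3$ path, there must exist a hole in $G$.''
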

\begin{proof}
    We first show that a hole is not in $\mathbf{m_3}/\mathbf{l_2}$. As shown in Figure \ref{fig:hole}, a hole has a pair of non-adjacent vertices $u,v$ and a $uv$-$l_3$-path $u=x_0,x_1,x_2=v$ which is not dominated by the $uv$-$m_3$ path $u=x_0,x_n,x_{n-1},\ldots,x_2=v$ ($x_1$ in the $uv$-$l_2$-path is not dominated). Hence, $\mathbf{m_3}/\mathbf{l_2}\subseteq \text{hole-free}$.

     On the other hand, let $G$ be a hole-free graph. In order to derive a contradiction, suppose $G\not\in \mathbf{m_3}/\mathbf{l_2}$. Then there exist two non-adjacent vertices $u$ and $v$, a $uv$-$m_3$ path $W: u=x_0,\ldots,x_n=v$ $(n\ge3)$ and a $uv$-$l_2$-path $W'$ satisfying that $W$ does not dominate $W'$. Thus, there is some internal vertex of $W'$ that is neither a vertex of $W$ nor adjacent to any internal vertex of $W$. Note that the length of $W'$ must be two since $u$ is not adjacent to $v$. Hence $W':u=x_0',x_1,x_2'=v$ and $x_2'$ is not adjacent to any internal vertex in $W$. Since $W$ is an $m_3$ path, there must exist a hole in $G$, a contradiction. Hence, hole-free $\subseteq \mathbf{m_3}/\mathbf{l_2}$. 
\end{proof}
\begin{theorem}
    $\mathbf{m_3}/\mathbf{l_3}=\left\{\text{hole},D, F,X_5\right\}$-$\text{free}$.
\end{theorem}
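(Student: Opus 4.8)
The plan is to prove $\mathbf{m_3}/\mathbf{l_3}=\{\text{hole},D,F,X_5\}$-free by establishing containment in both directions, mirroring the structure of the preceding proof for $\mathbf{m_3}/\mathbf{SP}$. For the forward inclusion $\mathbf{m_3}/\mathbf{l_3}\subseteq\{\text{hole},D,F,X_5\}$-free, I would exhibit, in each of the four forbidden graphs, a pair of non-adjacent vertices together with an explicit $uv$-$l_3$-path (an induced path of length $2$ or $3$) that fails to be dominated by a suitable $uv$-$m_3$ path. The hole and domino cases can reuse the non-adjacent vertex pairs and $m_3$-paths already identified in the $\mathbf{m_3}/\mathbf{SP}$ proof, since there the offending shortest paths $u=x_0,x_5,x_4,x_3=v$ (in $D$) and $u=x_0,x_1,x_2=v$ (in the hole) are themselves induced paths of length $\le 3$, hence valid $l_3$-paths; the same holds for $X_5$ and $F$, so the four witnesses transfer essentially verbatim.

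The substance lies in the reverse inclusion $\{\text{hole},D,F,X_5\}$-free $\subseteq\mathbf{m_3}/\mathbf{l_3}$. I would argue by contradiction: assume $G$ is $\{\text{hole},D,F,X_5\}$-free but $G\notin\mathbf{m_3}/\mathbf{l_3}$, so there are non-adjacent $u,v$, a $uv$-$m_3$ path $W:u=x_0,\ldots,x_n=v$ with $n\ge 3$, and a $uv$-$l_3$-path $W'$ that $W$ does not dominate. Since $u,v$ are non-adjacent and $W'$ is an induced path of length at most $3$, necessarily $W'$ has length exactly $2$ or $3$. The length-$2$ case is exactly Theorem~\ref{l2}: an $l_3$-path of length $2$ is an $l_2$-path, and hole-freeness already forces domination, so this subcase is immediately dispatched. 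Thus the only real work is when $W':u=x_0',x_1',x_2',x_3'=v$ is an induced path of length $3$, and the undominated internal vertex is $x_1'$ or $x_2'$ (by symmetry, say $x_2'$, with $x_1'$ possibly also undominated).

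The core combinatorial argument would then analyze how the two internal vertices $x_1',x_2'$ of $W'$ attach to $W$. Because the list of forbidden graphs is precisely $\{\text{hole},D,F,X_5\}$ — the same list as in $\mathbf{m_3}/\mathbf{SP}$ — I expect the case analysis to closely parallel Case 1 and Case 2 of that proof, tracking the first index along $W$ adjacent to a given vertex of $W'$ and showing that any configuration of edges between $\{x_1',x_2'\}$ and $W$ that leaves $x_2'$ undominated forces one of the four induced subgraphs. The key structural fact, already used repeatedly above, is that a chordless path closing up against $W$ produces a hole, while short almost-closed configurations produce $C_4$'s that extend to a house, $D$, $X_5$, or $F$ depending on adjacencies to the next vertex along $W$. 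The main obstacle will be confirming that the shorter length of $W'$ (exactly $3$ versus an arbitrarily long shortest path) does not require $F$ to be replaced or augmented by additional forbidden graphs: I would need to verify that every subcase in which $x_2'$ is the undominated vertex still terminates in one of $\{\text{hole},D,F,X_5\}$, in particular that the house that appears transiently in Case~1.2.1 is always forced to extend to $X_5$ or $F$ and never survives as a genuinely new obstruction. Once that check goes through, bidirectional containment gives the claimed equality.
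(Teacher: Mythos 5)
Your overall strategy is exactly the paper's: both inclusions, explicit undominated witnesses in the four forbidden graphs for the forward direction, and for the reverse direction a reduction via Theorem~\ref{l2} to the case where $W'$ is an induced path $u=x_0',x_1',x_2',x_3'=v$ of length three, followed by an analysis of how $x_1',x_2'$ attach to the $m_3$-path $W$. The forward direction as you describe it is fine; the witnesses from the $\mathbf{m_3}/\mathbf{SP}$ argument do transfer.

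The gap is that the reverse inclusion is only promised, not proved. Everything after ``the core combinatorial argument would then analyze\ldots'' is a statement of what must be verified rather than a verification, and you flag this yourself (``I would need to verify that every subcase \ldots still terminates in one of $\{\text{hole},D,F,X_5\}$''). That verification is the entire content of the theorem, and it cannot be inferred from the $\mathbf{m_3}/\mathbf{SP}$ case analysis: the classes $\mathbf{m_3}/\mathbf{SP}$ and $\mathbf{m_3}/\mathbf{l_3}$ are a priori incomparable (an $l_3$-path need not be shortest, and a shortest path need not have length at most three), so the coincidence of the forbidden lists is a conclusion, not a licence to reuse the earlier argument. The paper's actual analysis is in fact organized differently from, and more simply than, its SP proof, exploiting that $W'$ has only two internal vertices: after observing that exactly one of $x_1',x_2'$ (say $x_1'$) has a neighbour among the internal vertices of $W$ (if neither does, $G[W\cup W']$ is a hole; if both do, $W$ dominates $W'$), it takes $i$ to be the \emph{last} index with $x_1'x_i\in E(G)$, shows $i=n-1$ (otherwise $G[W[x_i,v]\cup W'[x_1',v]]$ is a hole, using that $x_2'$ has no neighbour on $W$), and then splits on whether $x_{i-1}$ is adjacent to $u$ and/or $x_1'$, each branch forcing an induced $D$, $X_5$, $F$, or hole. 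To complete your proof you would need to supply this (or an equivalent) finite case analysis explicitly.
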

\begin{proof}
    Analogous to the preceding proof, we first show that the forbidden graphs do not belong to the class $\mathbf{m_3}/\mathbf{l_3}$.
    
    As shown in Figure \ref{fig:hole}, a hole has a pair of non-adjacent vertices $u,v$ and a $uv$-$l_3$-path $u=x_0,x_1,x_2=v$ which is not dominated by the $uv$-$m_3$ path $u=x_0,x_n,x_{n-1},\ldots,x_2=v$ ($x_1$ in the $uv$-$l_3$-path is not dominated).
    
    As shown in Figure \ref{fig:domino}, $D$ has a pair of non-adjacent vertices $u,v$ and a $uv$-$l_3$-path $u=x_0,x_1,x_2,x_3=v$ which is not dominated by the $uv$-$m_3$ path $u=x_0,x_5,x_4,x_3=v$ ($x_2$ in the $uv$-$l_3$-path is not dominated). 
     
    As shown in Figure \ref{fig:f}, $F$ has a pair of non-adjacent vertices $u,v$ and a $uv$-$l_3$-path $u=x_1,x_2,x_3,x_4=v$ which is not dominated by the $uv$-$m_3$ path $u=x_1,x_0,x_6,x_5,x_4=v$ ($x_3$ in the $uv$-$l_3$-path is not dominated). 
    
    As shown in Figure \ref{fig:x5}, $X_5$ has a pair of non-adjacent vertices $u,v$ and a $uv$-$l_3$-path $u=x_0,x_5,x_4,x_3=v$ which is not dominated by the $uv$-$m_3$ path $u=x_0,x_1,x_2,x_3=v$ ($x_4$ in the $uv$-$l_3$-path is not dominated).


    Now we prove that $\left\{\text{hole},D, F, X_5 \right\}$-free $\subseteq \mathbf{m_3}/\mathbf{l_3}$. 
    Let $G$ be a $\left\{\text{hole},D,F, X_5 \right\}$-free graph.
    In order to derive a contradiction, suppose $G\not\in \mathbf{m_3}/\mathbf{l_3}$. Then there exist two non-adjacent vertices $u$ and $v$, a $uv$-$m_3$ path $W: u=x_0,\ldots,x_n=v$ $(n\ge3)$ and a $uv$-$l_3$-path $W'$ satisfying that $W$ does not dominate $W'$. Thus, there is some internal vertex of $W'$ that is neither a vertex of $W$ nor adjacent to any internal vertex of $W$. Note that the length of $W'$ must be at least two since $u$ is not adjacent to $v$. Then by Theorem~\ref{l2}, we can suppose the length of $W'$ is three and hence $W':u=x_0',x_1',x_2',x_3'=v$. Observe that $x_1',x_2'\not\in W$. Also, $x_1'$ and $x_2'$ cannot both be adjacent to internal vertices in $W$. If $x_1'$ and $x_2'$ are neither adjacent to internal vertices in $W$, then $G[W\cup W']$ is a hole, a contradiction.

    Hence we suppose that $x_1'$ is adjacent to some internal vertex in $W$ but $x_2'$ is not adjacent to any internal vertex in $W$. Let $i$ be the last index such that $x_1'$ is adjacent to $x_i$. We assert that $i=n-1$ since otherwise $G[W[x_i,v]\cup W'[x_1',v]]$ is a hole. 

    If $x_{i-1}$ is not adjacent to $x_1'$ and $u$, we suppose $x_j$ is the vertex in $W$ which is adjacent to $x_1'$ before $x_i$. Then either $G[W[x_j,v]\cup W'[x_1',v]]\cong D$ or $G[W[x_j,x_i]\cup \left\{x_1'\right\}]$ is a hole, a contradiction. If $x_{i-1}$ is adjacent to both $u$ and $x_1'$, then we have $G[\left\{x_i,x_1',x_2',v,u,x_{i-1} \right\}]\cong X_5$, a contradiction. Hence $x_{i-1}$ is only adjacent to one of $u$ and $x_1'$.

    \textbf{Case 1.} Suppose $x_{i-1}$ is only adjacent to $x_1'$, then $G[\left\{x_i,x_1',x_2',v,x_{i-1} \right\}]$ is a house. 
    Note that $x_{i-2}$ cannot be adjacent to $x_1'$ since otherwise $G[\{x_i,x_1',x_2',v,x_{i-1}$, $x_{i-2} \}]\cong X_5$, a contradiction. 
    Hence there exists $x_{i-3}\in W$ which may be equal to $u$.
    But now we have $G[\left\{x_i,x_1',x_2',v,x_{i-1},x_{i-2},x_{i-3} \right\}]\cong F$ or $G[\{x_1' \}\cup W[u,x_{i-1}]]$ contains a hole as an induced subgraph, a contradiction.

    \textbf{Case 2.} Suppose $x_{i-1}$ is only adjacent to $u$, now $G[\left\{x_i,x_1',x_2',v,u,x_{i-1} \right\}]\cong D$, a contradiction.

    When $x_2'$ is adjacent to some internal vertex in $W$ but $x_1'$ is not adjacent to any internal vertex in $W$, the proof is similar. Hence, $\left\{\text{hole}, D,F, X_5 \right\}$-free $\subseteq \mathbf{m_3}/\mathbf{l_3}$. Therefore,  $\mathbf{m_3}/\mathbf{l_3}=\left\{\text{hole},D, F, X_5 \right\}$-free. 
\end{proof}

\section{Conclusions}\label{Con}

In this paper, we continue the study of walk domination between $m_3$-paths and various other types of walks. First, we characterize the classes $\mathbf{A}/\mathbf{m_3}$ for $\mathbf{A}\in \left\{\mathbf{l_k},\mathbf{SP},\mathbf{TW},\mathbf{WTW} \right\}$ with $k=2,3$, proving that all these classes coincide with the class of $\text{HHD-free}$ graphs. 

Second, we consider the classes $\mathbf{m_3}/\mathbf{A}$ for $\mathbf{A}\in \left\{\mathbf{l_k},\mathbf{SP} \right\}$ with $k=2,3$. By introducing the graph $F$ as a new forbidden subgraph, we establish that $\mathbf{m_3}/\mathbf{l_3}=\left\{\text{hole}, D, X_5,F\right\}\text{-free}$ and $\mathbf{m_3}/\mathbf{l_2}=\text{hole-free}$. For the class $\mathbf{m_3}/\mathbf{SP}$, although we demonstrate that a complete characterization via forbidden induced subgraphs is impossible due to its non-hereditary nature, we still establish being $\left\{\text{hole}, D, X_5, F\right\}\text{-free}$ as a sufficient condition. Finding its exact characterization remains an open problem.

Furthermore, precise characterizations for the classes $\mathbf{m_3}/\mathbf{WTW}$ and $\mathbf{m_3}/\mathbf{TW}$ also remain open.
The reason is that these walk types allow for chords, which complicates the analysis.
The existence of such chords leads to a greater number of potential forbidden subgraphs, making the characterization significantly more challenging.


\paragraph{Data availability} Not applicable.

\section*{Declaration}

\paragraph{Conflict of interest}
The authors declare that they have no conflict of interest.

\section*{Acknowledgements}
The authors are highly grateful to the anonymous referee for pointing out that the converse of Theorem \ref{thm:suff} does not hold, which revealed the non-hereditary nature of the graph class $\mathbf{m_3}/\mathbf{SP}$.


\end{document}